\documentclass[11pt,oneside,reqno]{amsart}
\usepackage{xcolor}
\usepackage{hyperref}
\usepackage{enumitem}

\usepackage{geometry}
\usepackage{amssymb,amsmath,amsthm}
\numberwithin{equation}{section}
\usepackage[english]{babel}
\usepackage{textcomp}
\usepackage{epstopdf}
\usepackage{inputenc}
\usepackage{graphics}

\newcommand\numberthis{\addtocounter{equation}{1}\tag{\theequation}}

\definecolor{armygreen}{rgb}{0.29, 0.33, 0.13}
\definecolor{darkgreen}{rgb}{0.0, 0.2, 0.13}

\newtheorem{thm}{Theorem}[section]
\newtheorem{lem}[thm]{Lemma}

\newtheorem{Def}[thm]{Definition}

\newtheorem{rem}[thm]{Remark}

\newcommand{\Rem}{\begin{rem} \rm}
	\newcommand{\bdfn}{\begin{Def} \rm}
		\newcommand{\edfn}{\end{Def}}

	\newcommand{\ba}{\begin{array}}
		\newcommand{\ea}{\end{array}}

	\newcommand{\compconj}[1]{\overline{#1}}
	
\begin{document}
		\title{Generalized tri-circular projections on some spaces of analytic functions}
\author{Hemant Kumar}
\address{Department of Applied Sciences, Indian Institute of Information Technology Allahabad, Prayagraj-211015, U.P., India.}
\email{hemantkr.math@gmail.com}

\author{Himanshu Kumar}
\address{Department of Applied Sciences, Indian Institute of Information Technology Allahabad, Prayagraj-211015, U.P., India.}
\email{himanshumath1507@gmail.com}

\author{Rahul Maurya}
\address{Department of Mathematics and Statistics, Indian Institute of Technology Kanpur, Kanpur-208016, U.P., India}
\email{rahulmaurya7892@gmail.com}

\author{Abdullah Bin Abu Baker}
\address{Department of Applied Sciences, Indian Institute of Information Technology Allahabad, Prayagraj-211015, U.P., India.}
\email{abdullahmath@gmail.com}

\thanks{The first- and second-named authors gratefully acknowledge the Ministry of Education, New Delhi (India), for financial support, and IIIT Allahabad, India, for providing the resources and infrastructure to carry out this research.  Third-named author is supported by the Department of Mathematics and Statistics, Indian Institute of Technology Kanpur, India. Fourth-named author is partially supported by Anusandhan National Research Foundation (MATRICS) grant No. MTR/2022/000710.}

		\subjclass[2020]{46B04, 46E15, 47B01}
		
		\keywords{Analytic function, functional Banach space, generalized tri-circular projection, isometry}
		
		\date{\today}
		
\begin{abstract}
In this paper, we obtain the structure of a class of contractive projections, known as generalized tri-circular projections, on some functional Banach spaces on the open unit disk $\mathbb{D}$, which includes Hardy, Bergman and Bloch spaces. We then consider the space $S^p_{\mathcal{K}}$ of $\mathcal{K}$-valued analytic functions on $\mathbb{D}$ such that $f' \in H^p(\mathcal{K})$ and give complete description of generalized tri-circular projections on this space. Here, $\mathcal{K}$ is a complex separable Hilbert space, and $H^p(\mathcal{K})$ denotes the Hardy space of $\mathcal{K}$-valued analytic functions on $\mathbb{D}$. 			
\end{abstract}
		
\maketitle
		
\thispagestyle{empty}
		
\section{Introduction and preliminary Results}

Projections are basic building blocks in understanding the structure of a Banach space. By a projection on a Banach space $X$ we mean a bounded linear operator $P$ such that $P^2 = P$. We say that a projection $P$ is contractive (respectively, bi-contractive) if $||P|| = 1$ (respectively, $||P|| = ||I - P || = 1$). 

A class of bi-contractive projections, known as generalized bi-circular projections (henceforth $GBP$), have been studied extensively in the last two decades. This class was introduced by Fo\v{s}ner, Ili\v{s}evi\'{c} and Li in 2007 \cite{MDC}. Let $X$ be a Banach space, and let $I$ denotes the identity operator on $X$. A projection $P$ on $X$ is said to be a $GBP$ if $P + \lambda (I - P)$ is a surjective isometry on $X$, where $\mathbb{T}$ denotes the unit circle in the complex plane. In \cite{botelho2009generalized}, Botelho and Jamison characterized $GBPs$ on various Banach spaces of scalar- and vector-valued analytic functions, including Bergman, Bloch and Hardy spaces. Motivated by this work, in this paper, we give complete descriptions of generalized tri-circular projections on these spaces.

\begin{Def} \label{GTP}
Let $X$ be a Banach space. Let $\mathcal{C} = \{P_0,P_1, P_2\}$ be a collection of nonzero distinct projections. Then $\mathcal{C}$ is said to be a family of generalized tri-circular projections ($GTP$, for short) if 
\begin{enumerate}
\item $\mathcal{C}$ defines a partition of the identity by projections: $P_0 + P_1 + P_2 = I$,

\item $\mathcal{C}$ is an orthogonal: $P_iP_j = 0$ for $i \neq j$, $i, j =0,1,2$, and

\item there exist $\lambda_1, \lambda_2 \in \mathbb{T} \setminus \{1\}$ such that $T = P_0 + \lambda_1 P_1 + \lambda_2 P_2$ is a surjective isometry on $X$.
\end{enumerate}
Each $P_i$, $i = 0,1,2$ is called a $GTP$, and we refer to $T$ as the isometry associated with the family $\mathcal{C}$ and with each $P_i$. We also say that $\mathcal{C}$ is a family of $GTPs$ corresponding to an isometry $T$. 
\end{Def}

If $\mathcal{C} = \{P_0,P_1, P_2\}$ is a collection of nonzero distinct projections which defines an orthogonal partition of identity by projections (that is, $P_0 + P_1 + P_2 = I$ and $P_iP_j = 0$ for $i \neq j$, $i, j =0,1,2$) such that $P_0 + \lambda_1 P_1 + \lambda_2 P_2$ is a surjective isometry on $X$ for all $\lambda_1, \lambda_2 \in \mathbb{T}$, then we call $\mathcal{C}$ a family of tri-circular projections ($TP$, for short). In this case, each $P_i$, $i = 0,1,2$ is called a $TP$.

The study of $GTPs$ was initiated by Abu Baker and Dutta \cite{AS} in which they defined the notion of generalized $n$-circular projections as well, and described $GTPs$ on $C(\Omega)$, the space of all continuous functions on a compact Hausdorff space $\Omega$. It was shown that if $P_0$ is a $GTP$ on $C(\Omega)$, then $\lambda_1, \lambda_2$ are cube roots of unity. In \cite{vcuka2016generalized}, {\v{C}}uka and Ili{\v{s}}evi{\'c} determined the structure of $GTPs$ on minimal norm ideals in $B(\mathcal{H})$, different from the Hilbert-Schmidt class. Hosseini \cite{H} showed that there are no $GTPs$ on the spaces of functions of bounded variation and of absolutely continuous functions. For more results on $GTPs$ as well as generalized $n$-circular projections we refer interested readers to the papers \cite{AB, AS2016, DCE, DI, DI2017, maurya2024automorphisms} and the references therein. 

Let $\mathbb{D}$ be the open unit disk. A complex-valued function on $\mathbb{D}$ is said to be analytic at a point if it has a convergent power series representation in a neighbourhood around that point. The spaces of analytic functions are fundamental in complex analysis and have applications in various fields, including functional analysis, function theory, and operator theory. Hardy spaces, Bergman spaces, Bloch spaces, and $S^p_{\mathcal{K}}$ are some examples of the space of analytic functions which are commonly studied in complex analysis. 

In this paper, we first consider a class of functional Banach spaces \cite{CM} defined as follows. 

\begin{Def}
A Banach space of complex-valued functions on a set $X$ is called a functional Banach space on X if the vector operations are pointwise operations, $f(x) = g(x)$ for each $x \in X$ implies $f = g$, $f(x) = f(y)$ for every $f$ in the space implies $x = y$, and for each $x$ in $X$, the linear functional $f \mapsto f(x)$ is continuous.
\end{Def} 

Examples of functional Banach spaces are $\ell^p$, $C[0,1]$, Bergman, Bloch, and Hardy spaces. We note that $L^p [0,1]$ is not a functional Banach space since the map $f \mapsto f(1/2)$ is not a bounded linear functional. Following \cite{botelho2009generalized} we denote by $\mathcal{F}(\mathbb{D})$ a functional Banach space on the open unit disk $\mathbb{D}$ with the property that it contains sufficiently many polynomial functions to interpolate four points, and assume that surjective isometries of $\mathcal{F}(\mathbb{D})$ are of the form 
\begin{equation} \label{IFBS}
T (f)(z)= \beta (\varphi \prime(z))^{\frac{r}{p}} f (\varphi(z))-\beta \alpha f( \varphi(0)),
\end{equation}
where $\varphi$ is a disk automorphism on $\mathbb{D}$ with the form $\varphi (z)= \mu \frac{a-z}{1-\overline{a}z}, |a| <1, |\mu|=1=|\beta|, 0 \leq |\alpha| \leq 1, p$ a positive integer, and $r$ a nonnegative real number. 

We then consider the Banach space $S^p_{\mathcal{K}}$ which consists of all analytic functions $f: \mathbb{D} \rightarrow \mathcal{K}$ such that $f' \in H^p(\mathcal{K})$ equipped with the norm $\|f\|= |f(0)| + \| f'\|_p$, see \cite {hornor1999isometrically}. Here, $\mathcal{K}$ is a separable complex Hilbert space, and $H^p(\mathcal{K})$ denotes the Hardy space of $\mathcal{K}$-valued analytic functions on $\mathbb{D}$. The form of surjective Isometries on $S^p_{\mathcal{K}}$ was characterized by Hornor and Jamison \cite {hornor1999isometrically} and is stated in the following theorem. 

\begin{thm} \label{isoform}
Let $T$ be a linear isometry of $S^p_{\mathcal{K}}$ onto $S^p_{\mathcal{K}}, p \neq 2$. Then there exist unitary operators $U$ and $V$ on $\mathcal{\mathcal{K}}$, and a disk automorphism $\varphi$ such that 
\begin{equation}\label{eqn0}
T(f)(z) =  Vf(0) + U \int_{0}^{z} [\varphi'(\xi)]^{\frac{1}{p}}f'(\varphi(\xi)) d\xi, \ \forall f  \in S^p_{\mathcal{K}} \ \text{and} \ z \in \mathbb{D}.
\end{equation}
\end{thm}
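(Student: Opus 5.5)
This is the Hornor--Jamison characterization, which the paper quotes from \cite{hornor1999isometrically}. A self-contained proof would go by reducing to the isometries of $H^p(\mathcal{K})$ through the decomposition $f \leftrightarrow (f(0), f')$.

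\textbf{Step 1: the direct-sum picture.} The map $f \mapsto (f(0), f')$ is an isometric isomorphism of $S^p_{\mathcal{K}}$ onto the $\ell^1$-sum $\mathcal{K} \oplus_1 H^p(\mathcal{K})$. So I would characterize surjective isometries of this $\ell^1$-sum. Neither summand is an $L$-summand of the other (a Hilbert space has no nontrivial $L$-projections), so I would work with $L$-structure. Recall that an $L$-projection $P$ is one with $\|x\| = \|Px\| + \|x - Px\|$ for all $x$.

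\textbf{Step 2: isometries preserve $L$-projections.} A surjective isometry $T$ conjugates $L$-projections to $L$-projections. I would determine all $L$-projections of $\mathcal{K} \oplus_1 H^p(\mathcal{K})$ when $p \neq 2$.
\begin{itemize}
\item The projection onto the first coordinate is an $L$-projection.
\item $H^p(\mathcal{K})$ has no nontrivial $L$-projections: its $L$-projections would produce nontrivial $L$-summands. For $1 < p < \infty$ this is excluded by strict convexity. For $p = 1$ it is excluded by the Cunningham algebra structure of $H^1$, since inner-function arguments show that the only $L$-summands of $H^1$ are trivial.
\item $\mathcal{K}$ is strictly convex, so it also has none.
\end{itemize}
Hence the $L$-projections are exactly $0$, $I$, and the two coordinate projections. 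Since $\mathcal{K}$ and $H^p(\mathcal{K})$ are not isometric (the latter is not a Hilbert space for $p \neq 2$), $T$ must preserve each summand. Therefore $T(f)(0) = Vf(0)$ for a unitary $V$ on $\mathcal{K}$, and $f' \mapsto (Tf)'$ induces a surjective isometry $S$ of $H^p(\mathcal{K})$.

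\textbf{Step 3: the main obstacle.} The hard part is describing the surjective isometries $S$ of the vector-valued Hardy space $H^p(\mathcal{K})$ for $p \neq 2$ in the form
\[
Sg = U (\varphi')^{1/p} (g \circ \varphi).
\]
I would follow the Lin / Cambern--Jarosz approach.
\begin{itemize}
\item First, extend $S$ to an isometry on the boundary space $L^p(\mathbb{T}, \mathcal{K})$.
\item Use the Lamperti-type theorem for vector-valued $L^p$ with $p \neq 2$ (Greim, Sourour). This gives $Sg = U(\cdot)\, h\, (g \circ \tau)$ with a measurable field $U(\cdot)$ of unitaries and a regular set isomorphism $\tau$.
\item Exploit analyticity. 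Applying $S$ to $g = e$ (a constant vector) and to $g = ze$ shows that $h$ and $h\tau$ lie in $H^p$. Then $\tau$ is inner and, being measure-class preserving and bijective, is a disk automorphism $\varphi$, by Forelli's argument.
\item The modulus condition forces $|h|^p = |\varphi'|$ on $\mathbb{T}$, so $h = c\,(\varphi')^{1/p}$ since $h$ is outer.
\item Finally, show that $U(\cdot)$ is constant. Since $U(\cdot)e \in H^\infty(\mathcal{K})$ has unit norm almost everywhere, is analytic, and is likewise analytic for the inverse map, unitarity plus analyticity of both $U$ and $U^* = U^{-1}$ forces $U$ to be constant.
\end{itemize}

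\textbf{Step 4: reassembly.} Integrating $(Tf)' = U(\varphi')^{1/p} f'\circ\varphi$ from $0$ and adding $Vf(0)$ gives \eqref{eqn0}.
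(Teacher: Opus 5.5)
The paper gives no proof of this statement; it quotes it from Hornor and Jamison. Your outline therefore has to stand on its own. Steps 1, 2 and 4 are a sound reduction: $f\mapsto(f(0),f')$ identifies $S^p_{\mathcal{K}}$ isometrically with $\mathcal{K}\oplus_1 H^p(\mathcal{K})$, and the $L$-summand argument gives $T=V\oplus S$ with $V$ unitary and $S$ a surjective isometry of $H^p(\mathcal{K})$. Integrating then yields \eqref{eqn0}.

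Step 2 has two small gaps. First, going from ``neither summand has nontrivial $L$-projections'' to ``the only $L$-projections of the sum are $0$, $I$ and the coordinate projections'' needs Cunningham's theorem that $L$-projections commute. With it, every $L$-projection commutes with the coordinate projection and so splits as $P_1\oplus P_2$. Second, the case $p=1$ needs no inner-function machinery. If $f,g$ are nonzero elements of complementary $L$-summands of $H^p(\mathcal{K})$, then $\|f\pm g\|_p=\|f\|_p+\|g\|_p$ forces $\|f(\zeta)\|\,\|g(\zeta)\|=0$ a.e. on $\mathbb{T}$. This is impossible, since a nonzero $H^p(\mathcal{K})$ function vanishes only on a null set; the argument works for every $p$.

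The genuine gap is the first bullet of Step 3, and that is where the whole content of the theorem lies. There is no general principle that extends a surjective isometry of the subspace $H^p(\mathcal{K})$ to an isometry of $L^p(\mathbb{T},\mathcal{K})$. Even for scalar $H^p$ this step is the heart of Forelli's theorem, done through an equimeasurability argument. Rudin's extension theorem for subspaces containing the constants holds only for $p$ not an even integer, and only in scalar $L^p$. For $\mathcal{K}$-valued functions you would need a vector-valued equimeasurability argument, and you do not supply one.

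Moreover, the Greim--Sourour description applies to \emph{surjective} isometries of $L^p(\mathbb{T},\mathcal{K})$. Into-isometries need not have the form $U(\cdot)\,h\,(g\circ\tau)$; maps built from an isometric embedding of $\mathcal{K}$ into scalar $L^p$ are counterexamples. So you would also have to prove the extension is onto.

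The later bullets need repair as well. From $S(e)=h\,U(\cdot)e$ you cannot read off $h\in H^p$, because $h$ and $U(\cdot)$ are determined only up to a common unimodular factor. You must first normalize $h=\|S(e)(\cdot)\|$. Then get $\tau$ coordinatewise from $S(z^n e)=\tau^n S(e)$ and use this to show $\tau$ is inner. Finally, replace $h$ by $(\varphi')^{1/p}$, absorbing the phase into $U(\cdot)$, before running your constancy argument for $U$, which is correct once this is done.

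The efficient fix is to invoke the known description of surjective isometries of $H^p(\mathcal{K})$, $p\neq 2$ (Cambern), at Step 3. With that citation, your Steps 1, 2 and 4 give a complete proof.
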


In the next two sections, we respectively characterize $GTPs$ on $\mathcal{F}(\mathbb{D})$ and $S^p_{\mathcal{K}}$. We actually find out that if $P_0$ is a $GTP$ on $\mathcal{F}(\mathbb{D})$ then $\lambda_1, \lambda_2$ are cube roots of unity. Likewise, if $P_0$ is a $GTP$ on $S^p_{\mathcal{K}}$ then either $\lambda_1, \lambda_2$ are cube roots of unity or $P_0$ is a $TP$. 

We recall the following technical lemma, which will be used in this paper. 

\begin{lem} \label{Lem0}	
Let $X$ be a Banach space, and let $\mathcal{C} = \{P_0,P_1, P_2\}$ be a collection of nonzero distinct projections. Let $\lambda_1, \lambda_2$ be distinct complex numbers not equal to $1$. Then the following conditions are equivalent.
	\begin{enumerate}
		\item $T = P_0 + \lambda_1 P_1 + \lambda_2 P_2$ and $\mathcal{C}$ defines an orthogonal partition of identity by projections.
		\item The following holds: 
		$ (T- I)(T -\lambda_1 I)(T - \lambda_2 I) =0 $ and
		$$ P_0 = \frac{(T - \lambda_1 I)(T- \lambda_2 I)}{(1 - \lambda_1)(1 - \lambda_2)}, P_1 = \frac{(T - I)(T- \lambda_2 I)}{(\lambda_1 - 1)(\lambda_1 - \lambda_2)}, P_2 = \frac{(T - I)(T- \lambda_1 I)}{(\lambda_2 - 1)(\lambda_2 - \lambda_1)}. $$
	\end{enumerate}
\end{lem}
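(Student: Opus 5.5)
We prove the two implications separately. Both rest on the Lagrange interpolation polynomials attached to the three distinct nodes $1,\lambda_1,\lambda_2$:
$$ L_0(x)=\frac{(x-\lambda_1)(x-\lambda_2)}{(1-\lambda_1)(1-\lambda_2)},\quad L_1(x)=\frac{(x-1)(x-\lambda_2)}{(\lambda_1-1)(\lambda_1-\lambda_2)},\quad L_2(x)=\frac{(x-1)(x-\lambda_1)}{(\lambda_2-1)(\lambda_2-\lambda_1)}. $$
Write $\mu_0=1$, $\mu_1=\lambda_1$, $\mu_2=\lambda_2$. Then $L_i(\mu_j)=\delta_{ij}$, $L_0+L_1+L_2\equiv 1$ and $\mu_0L_0+\mu_1L_1+\mu_2L_2\equiv x$, since both sides are polynomials of degree at most $2$ that agree at three points. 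Moreover, for $i\neq j$ the product $L_iL_j$ and the polynomial $L_i^2-L_i$ are both divisible by $m(x)=(x-1)(x-\lambda_1)(x-\lambda_2)$, because each vanishes at all three nodes and the nodes are distinct.

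\textbf{(1)$\Rightarrow$(2).} Assume $T=P_0+\lambda_1P_1+\lambda_2P_2$ with $P_0+P_1+P_2=I$ and $P_iP_j=\delta_{ij}P_i$. By induction, $T^k=\sum_i \mu_i^kP_i$ for every $k\ge 0$. Hence for every polynomial $q$ we have $q(T)=\sum_i q(\mu_i)P_i$.
\begin{itemize}
\item Taking $q=m$ gives $m(T)=0$, which is the cubic identity in (2).
\item Taking $q=L_i$ gives $L_i(T)=P_i$, which are exactly the displayed formulas for $P_0,P_1,P_2$.
\end{itemize}

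\textbf{(2)$\Rightarrow$(1).} Assume $m(T)=0$ and $P_i=L_i(T)$. The map $q\mapsto q(T)$ is an algebra homomorphism that vanishes on multiples of $m$. From the divisibility facts above we obtain:
\begin{itemize}
\item $P_iP_j=(L_iL_j)(T)=0$ for $i\neq j$;
\item $P_i^2=P_i$;
\item $P_0+P_1+P_2=I$;
\item $P_0+\lambda_1P_1+\lambda_2P_2=T$.
\end{itemize}
The hypotheses that the $P_i$ are nonzero and distinct are part of the standing assumption on $\mathcal{C}$, so nothing further needs to be checked.

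The only point needing care is that the nodes $1,\lambda_1,\lambda_2$ are pairwise distinct. This is guaranteed by the hypothesis that $\lambda_1\neq\lambda_2$ and both differ from $1$, and it is what makes the denominators nonzero and the divisibility by $m$ valid. There is no genuine obstacle beyond this bookkeeping.
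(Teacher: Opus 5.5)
Your proof is correct. The Lagrange interpolation polynomials on the distinct nodes $1,\lambda_1,\lambda_2$, combined with the polynomial functional calculus $q\mapsto q(T)$ and the divisibility of $L_iL_j$ ($i\neq j$) and $L_i^2-L_i$ by $(x-1)(x-\lambda_1)(x-\lambda_2)$, give both implications exactly as you write them. The paper only recalls this lemma and gives no proof of its own, and your argument is the standard interpolation proof of this kind of statement, so there is nothing to add.
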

	
\section{Generalized tri-circular projections on $\mathcal{F}(\mathbb{D})$}

In this section, we state and prove our first result which characterizes $GTP$ on $\mathcal{F}(\mathbb{D})$.
	
\begin{thm} \label{main3}
Let $\mathcal{C} = \{P_0, P_1, P_2\}$ be a family of generalized tri-circular projections on $\mathcal{F}(\mathbb{D})$. Then one of the following holds: 

\begin{enumerate}
\item $\varphi(0)=0$. Then either
\begin{enumerate}
\item $\mu = 1$ and one of the members from $\mathcal{C}$ becomes a generalized bi-circular projection, or

\item $\mu^3 = - 1$, $\mu \neq - 1$, $\beta^3 = 1$ and $\lambda_1$, $\lambda_2$ are cube roots of unity. Moreover, $\alpha=0$ or $\alpha$ is a root of the equation $\alpha^2-3 \alpha (-\mu)^{\frac{r}{p}} + 3 (\mu^2)^{\frac{r}{p}}=0$.
\end{enumerate}

\item $\varphi(0) \neq 0, \varphi^2(0)= 0$. In this case, one of the members from $\mathcal{C}$ becomes a generalized bi-circular projection.

\item $\varphi(0) \neq 0, \varphi^2(0)\neq 0$ and $\varphi^3(z) = z$. Then $\lambda_1$, $\lambda_2$ are cube roots of unity, $\alpha = 0$, and $ \beta^3 = 1$. 
\end{enumerate}
\end{thm}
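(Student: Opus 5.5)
We use Lemma \ref{Lem0}: since $\mathcal{C}$ is a family of $GTPs$ with associated isometry $T=P_0+\lambda_1P_1+\lambda_2P_2$, we have $(T-I)(T-\lambda_1 I)(T-\lambda_2 I)=0$. Write $T$ in the form \eqref{IFBS}. Expanding, this says $T^3 - s_1T^2 + s_2 T - s_3 I = 0$, where $s_1=1+\lambda_1+\lambda_2$, $s_2=\lambda_1+\lambda_2+\lambda_1\lambda_2$ and $s_3=\lambda_1\lambda_2$. The plan is to compute $T^2 f$ and $T^3 f$ explicitly. Each is a weighted composition with $\varphi^2$, respectively $\varphi^3$, minus constant terms involving $f(\varphi(0))$, $f(\varphi^2(0))$ and $f(\varphi^3(0))$. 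We then test the cubic identity on polynomials; $\mathcal{F}(\mathbb{D})$ contains enough of them to interpolate four points. The case split in the theorem is dictated by whether the points $z,\varphi(z),\varphi^2(z),\varphi^3(z)$ and the relevant base points coincide.

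\textbf{Step 1 (the points must collapse).} Fix $z$ and choose a polynomial $f$ that vanishes at all but one of the points $\varphi^k(z)$ and $\varphi^k(0)$, $k=0,1,2,3$. If these points are distinct, the identity forces some coefficient such as $\beta^3(\varphi^3)'(z)^{r/p}$ to vanish, which is impossible. Hence for every $z$ some coincidence $\varphi^i(z)=\varphi^j(z)$ occurs with $i\ne j$ among $0,\dots,3$. By the identity principle, one of $\varphi=\mathrm{id}$, $\varphi^2=\mathrm{id}$ or $\varphi^3=\mathrm{id}$ holds on $\mathbb{D}$.

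\textbf{Step 2 (the cases).} If $\varphi(0)=0$, then $\varphi(z)=-\mu z$ (taking $a=0$), and $T f(z)=\beta(-\mu)^{r/p}f(-\mu z)-\beta\alpha f(0)$. Apply the cubic identity to the monomials $z^n$. This gives that each eigenvalue $\beta(-\mu)^{r/p}(-\mu)^n$ for $n\ge1$, together with the eigenvalue on constants, lies in $\{1,\lambda_1,\lambda_2\}$.

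If $-\mu$ has order $2$, i.e.\ $\mu=1$, then $T^2$ is essentially a scalar plus a rank-one correction. $T$ then has only two eigenvalues, so one projection is determined by a two-term relation and is a $GBP$; this is case (1a).

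If $-\mu$ is a primitive cube root of unity ($\mu^3=-1$, $\mu\neq-1$), then three distinct eigenvalues occur. They must be $\{1,\lambda_1,\lambda_2\}$, which forces $\beta^3=1$ and $\lambda_1,\lambda_2$ to be cube roots of unity. Higher orders of $-\mu$ would give too many eigenvalues. Finally, evaluating the identity on the constant function $1$ gives the condition on $\alpha$. Here $T1=\beta(-\mu)^{r/p}-\beta\alpha$, and the constant contributions from $T^2 1$ and $T^3 1$ produce the quadratic $\alpha^2-3\alpha(-\mu)^{r/p}+3(\mu^2)^{r/p}=0$, or else $\alpha=0$.

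If $\varphi(0)\ne0$ and $\varphi^2(0)=0$, then $\varphi^2=\mathrm{id}$, and $T^2$ is again scalar up to a correction. This yields a quadratic relation and hence a $GBP$, as in case (1a); this is case (2).

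If $\varphi^3=\mathrm{id}$ with $\varphi(0),\varphi^2(0)\ne0$, we evaluate at suitable points. We choose polynomials that separate $0$, $\varphi(0)$ and $\varphi^2(0)$ and compare the constant terms. This forces $\alpha=0$, and then $T^3=\beta^3(\cdot)$ by the chain rule, since $(\varphi^3)'=1$. Hence $T^3=\beta^3I$, and the three distinct eigenvalues give $\beta^3=1$ and $\lambda_i^3=1$; this is case (3).

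The main obstacle is the bookkeeping of the constant terms $\beta\alpha f(\varphi^k(0))$ in $T^2$ and $T^3$, which is needed to derive the condition on $\alpha$. In case (3) the same bookkeeping must show that $\alpha$ vanishes. I would handle it by writing $T=\beta C-\beta\alpha\,\delta_{\varphi(0)}\otimes 1$ and expanding the powers noncommutatively.
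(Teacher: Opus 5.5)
Your arguments for cases (1a) and (2) do not work as written.

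In (1a), put $c=\beta(-1)^{r/p}$. Then $Tz^n=(-1)^n c\,z^n$ for $n\ge 1$ and $T1=(c-\beta\alpha)1$, so $T$ can have the three eigenvalues $c,-c,c-\beta\alpha$. For a $GTP$ family it \emph{must} have three, since $TP_i=\lambda_iP_i$ (with $\lambda_0=1$) and every $P_i\neq 0$. So ``$T$ then has only two eigenvalues'' would contradict the hypothesis rather than yield a $GBP$. What is true is that $T^2$ is a scalar plus a rank-one operator, so two of $1,\lambda_1^2,\lambda_2^2$ coincide. More directly, $\pm c\in\{1,\lambda_1,\lambda_2\}$, which gives $\lambda_1=-\lambda_2$, $\lambda_1=-1$ or $\lambda_2=-1$.

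In case (2) the analogy with (1a) breaks down. With $a=\varphi(0)\neq 0$, $\varphi^2=\mathrm{id}$ and the normalization $[(\varphi^2)']^{r/p}=1$,
\[
T^2f=\beta^2 f-\beta^2\alpha\big[(\varphi')^{r/p}-\alpha\big]f(a)-\beta^2\alpha\,(\varphi'(a))^{r/p}f(0).
\]
For $\alpha\neq 0$ and $r>0$ this correction has rank two: it uses two different point evaluations with linearly independent coefficient functions. So $T^2$ may a priori have three eigenvalues, and the ``quadratic relation'' you invoke is never exhibited. You need an extra step.

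The paper's step: take polynomials $f,g$ vanishing at $0$ and $a$, with $f(z_0)=1=g(\varphi(z_0))$ and $f(\varphi(z_0))=0=g(z_0)$. Evaluating the cubic identity at $z_0$ gives $s_1\beta^2=-s_3$ and $\beta^2=-s_2$. Hence $s_1s_2=s_3$, i.e.\ $(\lambda_1+\lambda_2)(1+\lambda_1)(1+\lambda_2)=0$.

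Alternatively, in your eigenvalue language: $M=\{f: f(0)=f(a)=0\}$ is $T$-invariant and $T^2|_M=\beta^2 I$. Moreover $T|_M$ is not scalar (interpolate at $z_0,\varphi(z_0)$). Hence both $\beta$ and $-\beta$ are eigenvalues of $T$, and both lie in $\{1,\lambda_1,\lambda_2\}$.

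Two smaller points.
\begin{itemize}
\item You never exclude $\mu=-1$ ($\varphi=\mathrm{id}$), which the statement requires. There $T$ is a scalar plus a rank-one operator, hence has at most two eigenvalues, forcing some $P_i=0$; this is the paper's sub-case (b).
\item In Step 1 the eight points are never all distinct when, say, $\varphi(0)=0$. The correct version isolates only $f(\varphi^3(z))$. If $\varphi^3(z)\notin\{z,\varphi(z),\varphi^2(z),\varphi(0),\varphi^2(0),\varphi^3(0)\}$, you get $\beta^3[(\varphi^3)'(z)]^{r/p}=0$, which is impossible. The coincidences $\varphi^3(z)=\varphi^k(0)$ occur for at most three $z$. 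So off a finite set every $z$ is fixed by $\varphi$, $\varphi^2$ or $\varphi^3$, and a non-identity automorphism fixes at most one point of $\mathbb{D}$.
\end{itemize}

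Once repaired, Step 1 is a real gain over the paper, which simply lists its three cases without proving they are exhaustive. Your argument in (1b) is also sound: with $\omega=-\mu$ and $c=\beta\omega^{r/p}$, the eigenvalues $c\omega,c\omega^2,c$ of $z,z^2,z^3$ force $\{1,\lambda_1,\lambda_2\}=\{1,\omega,\omega^2\}$, and $(c-\beta\alpha)^3=1$ gives the condition on $\alpha$. So is your route in (3): first $\alpha=0$, then $T^3=\beta^3 I$, so $\lambda_1^3=\lambda_2^3=\beta^3=1$ because all $P_i\neq 0$. Both are cleaner than the paper's five- and six-point interpolations.
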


\begin{proof}
Let $\mathcal{C} = \{P_0, P_1, P_2\}$ be a family of generalized tri-circular projections on $\mathcal{F}(\mathbb{D})$. Then there exist distinct scalars $\lambda_1, \lambda_2 \in \mathbb{T}\setminus \{1\}$ such that $P_0 +\lambda_1 P_1 + \lambda_2 P_2 = T$. Thus,  $(T-I)(T -\lambda_1 I)(T - \lambda_2 I) =0$ or 
\begin{equation} \label{equ2}
T^3-(1+l) T^2 + (l+m) T -m I=0, \text{ where } l= \lambda_1 + \lambda_2, \; m = \lambda_1 \lambda_2.
\end{equation} 
Now,
\begin{align*}
T f(z) &= \beta (\varphi ^{\prime} (z))^{\frac{r}{p}} f(\varphi(z))- \beta \alpha f(\varphi(0)), \numberthis \label{equ3} \\
(T^2 f)(z)&= \beta ^2 \big[[(\varphi^2) ^ {\prime}(z)] ^{\frac{r}{p}} f(\varphi^2(z))- \alpha [\varphi^{\prime}(z)]^{\frac{r}{p}} f(\varphi(0)) - \alpha [\varphi^{\prime}(\varphi(0))]^{\frac{r}{p}} f(\varphi^2(0)) \big], \numberthis \label{equ4}
\end{align*}
and      
\begin{align*}
 (T^3 f)(z)) &= \beta ^3  [(\varphi^3) ^ {\prime}(z) ]^{\frac{r}{p}} f(\varphi^3(z))- \alpha \beta ^3  [(\varphi^2) ^ {\prime}(z)] ^{\frac{r}{p}} f(\varphi(0)) - \alpha \beta^3 [\varphi^{\prime}(z) \varphi ^{\prime} (\varphi(0)) ]^{\frac{r}{p}}\\ \noindent & f(\varphi^2(0)) + \alpha^2 \beta^3 [\varphi^{\prime}(z)]^{\frac{r}{p}} f(\varphi(0))- \alpha \beta^3 [\varphi^{\prime}(\varphi(0)) \varphi ^{\prime} (\varphi^2(0)) ]^{\frac{r}{p}} f(\varphi^3(0)) \\ \noindent &+ \alpha^2 \beta^3 [\varphi^{\prime}(\varphi(0))]^{\frac{r}{p}} f(\varphi(0)) + \alpha^2 \beta ^3 [\varphi ^{\prime} (\varphi(0))]^{\frac{r}{p}} f(\varphi^2(0)) - \alpha^3 \beta^3 f(\varphi(0)). \numberthis \label{equ5} 
\end{align*}
Now, Equation \eqref{equ2} becomes
\begin{align}
\beta ^3  [(\varphi^3) ^ {\prime}(z) ] ^{\frac{r}{p}} f(\varphi^3(z)) - \alpha \beta ^3  [(\varphi^2) ^ {\prime}(z)] ^{\frac{r}{p}} f(\varphi(0)) - \alpha \beta^3 [\varphi^{\prime}(z) \varphi ^{\prime} (\varphi(0)) ]^{\frac{r}{p}} f(\varphi^2(0)) & \nonumber \\ 
 + \alpha^2 \beta^3 [\varphi^{\prime}(z)]^{\frac{r}{p}} f(\varphi(0))- \alpha \beta^3 [\varphi^{\prime}(\varphi(0)) \varphi ^{\prime} (\varphi^2(0)) ]^{\frac{r}{p}} f(\varphi^3(0))+ \alpha^2 \beta^3 [\varphi^{\prime}(\varphi(0))]^{\frac{r}{p}} f(\varphi(0)) &\nonumber \\  + \alpha^2 \beta ^3 [\varphi ^{\prime} (\varphi(0))]^{\frac{r}{p}} f(\varphi^2(0)) - \alpha^3 \beta^3 f(\varphi(0)) - (1+l) \beta ^2 \big[ [(\varphi^2) ^ {\prime}(z)] ^{\frac{r}{p}} f(\varphi^2(z)) & \nonumber \\ - \alpha [\varphi^{\prime}(z)]^{\frac{r}{p}} f(\varphi(0))  - \alpha [\varphi^{\prime}(\varphi(0))]^{\frac{r}{p}} f(\varphi^2(0)) + \alpha^2  f(\varphi(0))\big] + & \nonumber \\ (l+m) \beta \big[ [\varphi ^{\prime} (z)]^{\frac{r}{p}} f(\varphi(z))- \alpha f(\varphi(0)) \big]- m f(z) =0. \label{equ6} 
\end{align}
Consider the following three cases.

\begin{enumerate}
\item $\varphi(0)=0$
\item $\varphi(0)\neq 0, \varphi^2(0)=0$
\item $\varphi(0) \neq 0, \varphi^2(0)\neq 0$ and $\varphi^3(z) = z$ for every $z\in \mathbb{D}$.
\end{enumerate}
			
\subsection*{Case (1)} $\varphi(0)=0$. This implies that $\varphi (z)= -\mu z$ for all $z \in \mathbb{D}$. Assume that $\mu \neq -1$, i.e., $\varphi(z) \neq z$ (we will deal with the sub-case $\varphi(z)=z$ separately). Using $\varphi (z)= -\mu z$ in Equation \eqref{equ6} we obtain,
\begin{align*}
\beta ^3  [-\mu ^3] ^{\frac{r}{p}} f(-\mu^3 z)- (1+l) \beta ^2 [\mu^2] ^{\frac{r}{p}} f(\mu ^2 z)+(l+m) \beta [-\mu]^{\frac{r}{p}} f(-\mu z )+  \big[3 \alpha^2 \beta^3 [-\mu] ^{\frac{r}{p}} & \noindent \\ -3\alpha \beta ^3  [\mu^2]  ^{\frac{r}{p}}-\alpha^3 \beta^3 + 2(1+l) \alpha  \beta ^2 [-\mu] ^{\frac{r}{p}}- (1+l) \alpha^2  \beta ^2 - (l+m) \alpha \beta \big]f(0)-m f(z)=0 & \noindent \numberthis \label{eq2a}
\end{align*}

Let $z_0$ be a nonzero element in $\mathbb{D}$. Now we have three possible sub-cases:
\begin{enumerate}[label=(\alph*)]
	\item $\mu=1$ 
	\item $\mu= -1$ 
	\item $\mu \neq \pm 1$
\end{enumerate}

\subsection*{Sub-case (a)} $\mu=1$. So, Equation \eqref{eq2a} becomes  
\begin{align*}
(- 1)^{\frac{r}{p}} \beta [\beta ^2 + l+m]f(-z)-  [(1+l) \beta ^2 + m]f(z)+  \big[3 \alpha^2 \beta^3 (-1)^{\frac{r}{p}} & \noindent \\ -3\alpha \beta ^3  -\alpha^3 \beta^3 + 2(1+l) \alpha  \beta ^2 (- 1)^{\frac{r}{p}}- (1+l) \alpha^2  \beta ^2 - (l+m) \alpha \beta \big]f(0)=0 & \noindent \numberthis \label{eq2a'}
\end{align*}
We can choose appropriate polynomials to get $\beta ^2 + l+m = 0$ and $(1+l) \beta ^2 + m = 0$. Solving these two equations we get $(\lambda_1+\lambda_2)(\lambda_1+1) (\lambda_2+1)=0$. Thus, $\lambda_1= - \lambda_2$, $\lambda_2=-1$ or $\lambda_1=-1$, which implies that $P_0, P_1$,  or $P_2$ respectively, becomes a generalized bi-circular projection.

\subsection*{Sub-case (b)} $\mu = -1$, i.e., $\varphi(z) =z$. Equation \eqref{equ6} takes the form
\begin{align} \label{eq5a}
[\beta ^3- (1+l) \beta ^2 +(l+m) \beta-m] f(z) + \big[\beta^3 (3\alpha^2 - 3 \alpha  - \alpha^3) + (1+l) \beta^2 ( 2 \alpha- \alpha ^2)  &\nonumber \\ -  \alpha \beta (l+m) \big]f(0) =0.
\end{align}
Then 
\begin{equation}
\label{eq6}
\beta ^3- (1+l) \beta ^2 +(l+m) \beta-m=0. 
\end{equation}
It follows that $\beta = 1, \lambda_1$ or $\lambda_2$. Using these values in Equation \eqref{eq5a} we conclude the following.

If $\beta=1$, then $\alpha f(0)=0$, $\alpha= 1-\lambda_1$ or $\alpha= 1-\lambda_2$. This implies that $P_3=0$. 

If $\beta= \lambda_1$, then  $\alpha f(0)=0$, $\alpha= 1- \compconj{\lambda_1}$ or $\alpha= 1- \compconj{\lambda_1} \lambda_2$. This implies that $P_1=0$ or $P_3=0$. 

If $\beta= \lambda_2$, then  $\alpha f(0)=0$, $\alpha= 1- \compconj{\lambda_2}$ or $\alpha= 1- \compconj{\lambda_2} \lambda_1$. This implies that $P_1=0$ or $P_2=0$.

Since we considered only nonzero projections, this sub-case is not possible, that is, $\mu \neq -1$. 
		
\subsection*{Sub-case (c)} $\mu \neq \pm 1$. Then $\mu^2 z_0 \notin \{ 0, z_0, -\mu^3 z_0, -\mu z_0\}$. We choose polynomials $f$ and $g$ in Equation \eqref{eq2a} such that $$f(0)=f(-\mu^3 z_0)= f(-\mu z_0)= f(z_0)=0 \text{ and } f(\mu ^2 z_0)=1;$$ 
$$g(0)=g(\mu^2 z_0)= g(-\mu^3 z_0)= g(z_0)=0 \text{ and } g(-\mu z_0) =1,$$
to get $ l= -1$ and $l+m = 0$, respectively. This implies that $\lambda_1$ and $\lambda_2$ are cube roots of unity. It also follows from Equation \eqref{eq2a} that $- \mu ^3 = 1$, and hence $\beta^3=1$. In this case, either $\alpha=0$ or $\alpha$ is a root of the equation $\alpha^2-3 \alpha (-\mu)^{\frac{r}{p}} + 3 (\mu^2)^{\frac{r}{p}}=0$.

\subsection*{Case (2)} $\varphi(0) \neq 0$, but $\varphi^2(0)=0$. This implies $\varphi^2(z)= z$ and $\mu=1$. Now we can rewrite the Equation \eqref{equ6} as  
\begin{align*}
\beta ^3  [\varphi ^ {\prime}(z)] ^{\frac{r}{p}} f(\varphi(z))-\alpha \beta^3 f(\varphi(0)) - \alpha \beta ^3  [\varphi ^ {\prime}(z) \varphi^{\prime}(\varphi(0))] ^{\frac{r}{p}} f(0)  + \alpha^2 \beta^3 [\varphi^{\prime}(z)]^{\frac{r}{p}} f(\varphi(0)) &\noindent \\ - \alpha \beta^3 [\varphi ^ {\prime}(0) \varphi^{\prime}(\varphi(0)) ]^{\frac{r}{p}} f(\varphi(0))  + \alpha^2 \beta^3 [\varphi^{\prime}(\varphi(0))]^{\frac{r}{p}} f(\varphi(0)) + \alpha^2 \beta ^3 {\varphi ^{\prime} (\varphi(0))}^{\frac{r}{p}} f(0) - \alpha^3 \beta^3 f(\varphi(0)) &\noindent \\ - (1+l) \beta ^2 \left( f(z)- \alpha [(\varphi) ^ {\prime}(z)] ^{\frac{r}{p}} f(\varphi(0))- \alpha [\varphi^{\prime}(\varphi(0))]^{\frac{r}{p}} f(0)  + \alpha^2 f(\varphi(0))\right) &\noindent \\ +(l+m) \beta\left((\varphi ^{\prime} (z))^{\frac{r}{p}} f(\varphi(z))- \alpha f(\varphi(0))\right)- m f(z) =0. \numberthis \label{Eq1}
\end{align*}		
		
For a nonzero $z_0$ different from $\varphi(0)$, we choose a polynomial $f$ such that $f(z_0)=1, f(0)=f(\varphi(0))=f(\varphi(z_0))= 0$ to get $\beta^2 (1+l) = -m $. Again, for a polynomial $g$ such that $ g(\varphi(z_0))=1, g(0)=g(\varphi(0))=g(z_0)= 0$ we obtain $\beta^2 = -(l+m)$. It follows that $(\lambda_1+\lambda_2)(\lambda_1+1)(\lambda_2+1)=0$, which implies that $\lambda_1= - \lambda_2$ or $\lambda_2=-1$ or $\lambda_1=-1$. Therefore, $P_0$ or $P_1$ or $P_2$, respectively, becomes a generalized bi-circular projection.
		
\subsection*{Case (3)} $\varphi(0) \neq 0, \varphi^2(0)\neq 0$ and $\varphi^3(z) = z$. In this case,  Equation (\ref{equ6}) takes the form
\begin{align}
\left[\beta ^3  [(\varphi^3) ^ {\prime}(z)] ^{\frac{r}{p}} - m \right] f(z)- \alpha \beta ^3  ([(\varphi^2) ^ {\prime}(z)] ^{\frac{r}{p}} f(\varphi(0)) - \alpha \beta^3 [\varphi^{\prime}(z) \varphi ^{\prime} (\varphi(0)) ]^{\frac{r}{p}} f(\varphi^2(0)) & \nonumber \\ 
+ \alpha^2 \beta^3 [\varphi^{\prime}(z)]^{\frac{r}{p}} f(\varphi(0))- \alpha \beta^3 [\varphi^{\prime}(\varphi(0)) \varphi ^{\prime} (\varphi^2(0))]^{\frac{r}{p}} f(0)+ \alpha^2 \beta^3 [\varphi^{\prime}(\varphi(0))]^{\frac{r}{p}} f(\varphi(0)) &\nonumber \\  
+ \alpha^2 \beta ^3 (\varphi ^{\prime} (\varphi(0)))^{\frac{r}{p}} f(\varphi^2(0)) - \alpha^3 \beta^3 f(\varphi(0)) - (1+l) \beta ^2 [[(\varphi^2) ^ {\prime}(z)] ^{\frac{r}{p}} f(\varphi^2(z)) & \nonumber \\  
- \alpha [\varphi^{\prime}(z)]^{\frac{r}{p}} f(\varphi(0)) - \alpha [\varphi^{\prime}(\varphi(0))]^{\frac{r}{p}} f(\varphi^2(0)) + \alpha^2  f(\varphi(0))]   +(l+m) \beta [(\varphi ^{\prime} (z))^{\frac{r}{p}} f(\varphi(z)) & \nonumber \\ 
- \alpha f(\varphi(0))] =0. \numberthis \label{equ7}
\end{align}
		
For a nonzero $z_0 \in \mathbb{D}$, consider the set $ \Gamma= \{ 0, z_0, \varphi(0), \varphi^2(0), \varphi(z_0), \varphi^2(z_0) \}$. Choose polynomials $f, g$ and $h$ in $\mathcal{F}(\mathbb{D})$ such that $f(z_0) = 1$, $g(\varphi(z_0)) = 1$, $h(\varphi^2(z_0)) = 1$, and are respectively zero at all other points of the set $\Gamma$. Using the functions $f, g$ and $h$, Equation \eqref{equ7} respectively reduces to $\beta ^3  = m$, $(l + m)\beta [\varphi^{'}(z)]^\frac{r}{p} = 0$ and $	-(1 + l)\beta^2 [ (\varphi^2)^\prime (z))]^\frac{r}{p} = 0$. Since $\varphi^{'}(z)\neq 0$ for all $z \in \mathbb{D}$, we conclude that $1 + \lambda_1 + \lambda_2 = 0$ and $\lambda_1 + \lambda_2 + \lambda_1\lambda_2 = 0$. This implies that $\lambda_1$ and $\lambda_2$ are cube roots of unity. Using all these information in Equation \eqref{equ7} and observing that $\varphi^{\prime}(\varphi(0)) \varphi ^{\prime} (\varphi^2(0)) \neq 0$, we obtain $\alpha = 0$. 

This completes the proof of the theorem. 
 \end{proof}

\section{Generalized tri-circular projections on $S^p_{\mathcal{K}}$}

In this section, we state and prove our second result which characterizes $GTP$ on $S^p_{\mathcal{K}}$.

\begin{thm} \label{main4}
Let $\mathcal{C} = \{P_0, P_1, P_2\}$ be a family of generalized tri-circular projections on $S^p_{\mathcal{K}}, p \neq 2$. Then one of the following holds: 
\begin{enumerate}
\item $\varphi (z) = z$. Then $\mathcal{C}$ is a family of tri-circular projections. Moreover, the point spectrum of $U$ and $V$ is $\sigma_p = \{1, \lambda_1, \lambda_2\}$.

\item $\varphi(z) \neq z, \varphi^2(z) =z$. In this case, one of the members from $\mathcal{C}$ becomes a generalized bi-circular projection.

\item $\varphi(z) \neq z, \varphi^2(z) \neq z, \varphi^3(z) =z$. Then $\lambda_1$ and $\lambda_2$ are cube roots of unity, and $U^3 = V^3 = I_{\mathcal{K}}$, where $I_{\mathcal{K}}$ denotes the identity operator on $\mathcal{K}$.
\end{enumerate}
\end{thm}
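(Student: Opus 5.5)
By Lemma \ref{Lem0}, $T=P_0+\lambda_1P_1+\lambda_2P_2$ satisfies
\[
T^3-(1+l)T^2+(l+m)T-mI=0,\qquad l=\lambda_1+\lambda_2,\ m=\lambda_1\lambda_2 .
\]
By Theorem \ref{isoform}, $T$ has the form \eqref{eqn0}. The convenient feature of $S^p_{\mathcal K}$ is that $T$ splits into two pieces: the constant term and the derivative. Writing $f=f(0)+\int_0^z f'$, we have
\[
(Tf)(0)=Vf(0),\qquad (Tf)'=U\,C_\varphi f',
\]
where $C_\varphi g=(\varphi')^{1/p}\,g\circ\varphi$ is the weighted composition operator on $H^p(\mathcal K)$. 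By induction,
\[
(T^nf)(0)=V^nf(0),\qquad (T^nf)'=U^n(\varphi_n')^{1/p}f'\circ\varphi_n,
\]
where $\varphi_n$ is the $n$-th iterate. Here I use that $U$ acts only on $\mathcal K$-values and so commutes with composition in $z$, and that the chain rule makes the weights multiply: $(\varphi_{n}')^{1/p}=\prod_{k} (\varphi'\circ\varphi_k)^{1/p}$ with a consistent branch. The cubic identity then decouples into two identities. Evaluating at $0$ gives
\[
V^3-(1+l)V^2+(l+m)V-m=0 \quad\text{on } \mathcal K,
\]
and differentiating gives, for every $g\in H^p(\mathcal K)$,
\[
U^3(\varphi_3')^{1/p}g\circ\varphi_3-(1+l)U^2(\varphi_2')^{1/p}g\circ\varphi_2+(l+m)U(\varphi')^{1/p}g\circ\varphi-mg=0 .
\]

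The next step is to show that $\varphi$ has order at most $3$; this is the point needing most care. I would test the derivative identity on $g=x\,h$ with $x\in\mathcal K$ and $h$ a scalar polynomial, and then pair with a functional $y\in\mathcal K$. If $z_0,\varphi(z_0),\varphi_2(z_0),\varphi_3(z_0)$ are distinct, choose $h$ vanishing at three of these points and equal to $1$ at the fourth. This forces $m=0$ (or an analogous contradiction), which is impossible since $|m|=1$. Hence $\varphi_k(z_0)=z_0$ for some $k\le 3$. Because this holds at every point, and $\varphi_k$ is a disk automorphism fixing infinitely many points, $\varphi_k=\mathrm{id}$ for some $k\in\{1,2,3\}$. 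This splits the argument into the three cases of the statement. Some care is needed when $k$ differs from point to point; the identity theorem applied to $\varphi_k(z)-z$ on a set with an accumulation point settles it.

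\emph{Case $\varphi=\mathrm{id}$.} Then $C_\varphi=I$, and the derivative identity says $(U-1)(U-\lambda_1)(U-\lambda_2)=0$; $V$ satisfies the same cubic. Both unitaries are therefore diagonalizable with spectrum in $\{1,\lambda_1,\lambda_2\}$. Consequently $P_j$ acts as (spectral projection of $V$ for $\lambda_j$ at $f(0)$) plus (spectral projection of $U$ for $\lambda_j$ on $f'$). Replacing $\lambda_1,\lambda_2$ by arbitrary unimodular numbers still yields an operator of the form \eqref{eqn0}, since $V'$ and $U'$ remain unitary. So $\mathcal C$ is a family of tri-circular projections. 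To see that the point spectrum is all of $\{1,\lambda_1,\lambda_2\}$ I would use that each $P_j\neq 0$; an equality of spectral sets for $U$ and $V$ individually may need a separate check.

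\emph{Case $\varphi\ne\mathrm{id}$, $\varphi_2=\mathrm{id}$.} Separate the points $z_0$ and $\varphi(z_0)$, as in Case (2) of Theorem \ref{main3}. The derivative identity reduces to
\[
\bigl(U^3(\varphi_3')^{1/p}-(1+l)U^2\cdots\bigr)\, g(\varphi(z_0))+\cdots=0 ,
\]
which yields $(1+l)U^2(\varphi_2')^{1/p}=-m$ and $U^3(\ldots)=-(l+m)U(\ldots)$. Since $\varphi_2=\mathrm{id}$ and $\varphi_3=\varphi$, the weights $(\varphi_2')^{1/p}$ and $(\varphi_3')^{1/p}$ are respectively $1$ and $(\varphi')^{1/p}$, up to a unimodular branch constant. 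Eliminating $U^2$ as before gives $(\lambda_1+\lambda_2)(1+\lambda_1)(1+\lambda_2)=0$, so one member of $\mathcal C$ is a generalized bi-circular projection.

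\emph{Case $\varphi_3=\mathrm{id}$, $\varphi,\varphi_2\ne\mathrm{id}$.} The points $z_0,\varphi(z_0),\varphi_2(z_0)$ are distinct for generic $z_0$. Separating them gives $1+l=0$, $l+m=0$, and $U^3(\varphi_3')^{1/p}=m$. Hence $\lambda_1,\lambda_2$ are the nontrivial cube roots of unity, $m=1$, and $U^3=I_{\mathcal K}$, using that the branch of $(\varphi_3')^{1/p}$ equals $1$ since $\varphi_3'=1$. The scalar cubic for $V$ becomes $V^3-I=0$. Keeping track of the branch constants of $(\varphi')^{1/p}$ under iteration is the main technical nuisance in this and the preceding case.
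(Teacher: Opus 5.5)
\textbf{Gap in Case (3).} The step that fails is the end of Case (3), where you say that the branch of $(\varphi_3')^{1/p}$ equals $1$ because $\varphi_3'=1$. With your own (correct) convention $(\varphi_3')^{1/p}=\prod_{k=0}^{2}(\varphi'\circ\varphi_k)^{1/p}$, this function is an analytic $p$-th root of the constant $1$. It is therefore a unimodular constant $c$, but nothing forces $c=1$: at the fixed point $z^*$ of $\varphi$ one has $c=\bigl([\varphi'(z^*)]^{1/p}\bigr)^3$ with $\varphi'(z^*)=e^{\pm 2\pi i/3}$.

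What your separation argument actually proves is $V^3=I_{\mathcal K}$ and $U^3=\bar c\, I_{\mathcal K}$, and the difference is real. Take $p=3$, $\omega=e^{2\pi i/3}$, $\varphi(z)=\omega z$, $V=I_{\mathcal K}$ and $U=\bar c\, I_{\mathcal K}$ with $c^3=\omega$. Then $Tf(z)=f(0)+\bar\omega\bigl(f(\omega z)-f(0)\bigr)$ is a surjective isometry with $T1=1$ and $Tz^n=\omega^{n-1}z^n$ for $n\ge 1$. Its three spectral projections are nonzero and form a $GTP$ family with $\lambda_1=\omega$, $\lambda_2=\omega^2$. Yet $U^3=\bar\omega I_{\mathcal K}\neq I_{\mathcal K}$, and this holds for every representation of $T$ in the form \eqref{eqn0}, since changing the branch by a cube root of unity $\zeta$ only replaces $U$ by $\bar\zeta U$.

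So $U^3=I_{\mathcal K}$ cannot be proved in the stated generality. It does hold, for example, when $p=1$, where $c=\varphi_3'=1$. The paper's proof makes exactly the same silent normalization when it writes $U^3\int_0^z f'(\xi)\,d\xi$ for the $T^3$ term. On this point you are reproducing the paper, not missing an idea it has.

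\textbf{Comparison with the paper.} Apart from this, your route is sound and in two respects better than the paper's.
\begin{enumerate}
\item You actually prove that the three cases are exhaustive. Separating $z_0$ from $\varphi(z_0),\varphi_2(z_0),\varphi_3(z_0)$ forces $m=0$, so every point is fixed by some $\varphi_k$ with $k\le 3$, and a non-identity automorphism has at most one fixed point. The paper just lists the cases.
\item You read off the coefficient identities by point separation with $g=xh$, instead of the test functions $v,\xi v,\xi^2 v$ followed by elimination. In Case (2) this works because the \emph{same} constant $\omega_2=(\varphi'\circ\varphi)^{1/p}(\varphi')^{1/p}$ appears in $(\varphi_2')^{1/p}=\omega_2$ and in $(\varphi_3')^{1/p}=\omega_2(\varphi')^{1/p}$, so it cancels. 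This is worth saying explicitly, since two unrelated constants would not cancel.
\end{enumerate}

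\textbf{Case (1).} Your caution about the point spectrum is justified. With $\mathcal K=\mathbb C^2$, $V=I$ and $U=\mathrm{diag}(\lambda_1,\lambda_2)$, one gets a $GTP$ family with $\sigma_p(V)=\{1\}$ and $\sigma_p(U)=\{\lambda_1,\lambda_2\}$. So only $\sigma_p(U),\sigma_p(V)\subseteq\{1,\lambda_1,\lambda_2\}$, with union equal to $\{1,\lambda_1,\lambda_2\}$, is provable. The paper's argument also gives only the containment.
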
			
\begin{proof}
Let $\mathcal{C} = \{P_0, P_1, P_2\}$ be a family of generalized tri-circular projections on $S^p_{\mathcal{K}}$. Then there exist distinct scalars $\lambda_1, \lambda_2 \in \mathbb{T}\setminus \{1\}$ such that $P_0 +\lambda_1 P_1 + \lambda_2 P_2 = T$.	
	
By Theorem \ref{isoform}, we have
\begin{equation*}
T(f)(z) =  Vf(0) + U \int_{0}^{z} [\varphi'(\xi)]^{\frac{1}{p}}f'(\varphi(\xi)) d\xi, \ \forall f  \in S^p_{\mathcal{K}} \ \text{and} \ z \in \mathbb{D}.
\end{equation*}
Simple computations show that
\begin{align*}
(Tf)(z) =&  Vf(0) + U \int_{0}^{z} [\varphi'(\xi)]^{\frac{1}{p}}f'(\varphi(\xi)) d\xi, \\
(T^2f)(z) =&  V^2f(0) + U^2 \int_{0}^{z} [(\varphi^2)'(\xi)]^{\frac{1}{p}}f'(\varphi^2(\xi)) d\xi, \\
(T^3f)(z) =&  V^3f(0) + U^3 \int_{0}^{z} [(\varphi^3)'(\xi)]^{\frac{1}{p}}f'(\varphi^3(\xi)) d\xi. 
\end{align*}
Lemma \ref{Lem0} gives the following 
\begin{equation*}
T^3f(z)-(1+l) T^2f(z) + (l+m) Tf(z) -mf(z) = 0,\  \forall f \in S^p_{\mathcal{K}} \ \text{and} \ z \in \mathbb{D} 
\end{equation*}
where $ l= \lambda_1 + \lambda_2, \; m = \lambda_1 \lambda_2$.

Putting the values of $T, T^2$ and $T^3$,
\begin{align} \label{mi}
\Big[V^3f(0) + U^3 \int_{0}^{z} [(\varphi^3)'(\xi)]^{\frac{1}{p}} f'(\varphi^3(\xi)) d\xi\Big]  - (1+l)\Big[ V^2f(0) + U^2 \int_{0}^{z} [(\varphi^2)'(\xi)]^{\frac{1}{p}} f'(\varphi^2(\xi)) d\xi \Big]  \notag \\
+(l+m) \Big[ Vf(0) + U \int_{0}^{z}[\varphi'(\xi)] ^{\frac{1}{p}} f'(\varphi(\xi)) d\xi \Big] - m f(z) =0.
\end{align}
After differentiation, we get
\begin{align} \label{me2}
U^3 [(\varphi^3)'(z)]^{\frac{1}{p}} f'(\varphi^3(z)) -(1+l) U^2[(\varphi^2)'(z)]^{\frac{1}{p}} f'(\varphi^2(z)) & \nonumber \\ 
+ (l+m) U[\varphi'(z)] ^{\frac{1}{p}} f'(\varphi(z)) -mf'(z) =0.
\end{align}
We examine the following three possible cases:
 
\begin{enumerate}
\item $\varphi(z) = z$,
\item $\varphi(z) \neq z, \varphi^2(z) =z$,
\item $\varphi(z) \neq z, \varphi^2(z) \neq z, \varphi^3(z) =z$.          
\end{enumerate}
\subsection*{Case (1)} $\varphi(z)= z$. 

Equation \eqref{me2} reduces 
\begin{align*}
V^3f(0) + U^3 [f(z)-f(0)] - (1+l)\Big[ V^2f(0) + U^2 [f(z)-f(0)] \Big]  \\
+(l+m) \Big[ Vf(0) + U [f(z)-f(0)] \Big] - m f(z) =0.
\end{align*}
On simplifying,
\begin{align*}
[U^3 -(1+l) U^2 + (l+m)U -m I_{\mathcal{K}}] f(z) + [V^3 -(1+l) V^2   \\ 
+ (l+m)V -[U^3 -(1+l) U^2 + (l+m)U] f(0) &= 0.
\end{align*}
By choosing suitable functions, we get the following equations.
\begin{equation*}
U^3 -(1+l) U^2 + (l+m)U -m I_{\mathcal{K}} =0 \text{ or } (U - I_{\mathcal{K}})(U - \lambda_1 I_{\mathcal{K}})(U - \lambda_2 I_{\mathcal{K}}) = 0,
\end{equation*}
and 
\begin{equation*}
V^3 -(1+l) V^2 + (l+m)V -m I_{\mathcal{K}} =0 \text{ or } (V - I_{\mathcal{K}})(V - \lambda_1 I_{\mathcal{K}})(V - \lambda_2 I_{\mathcal{K}}) = 0.
\end{equation*}
Therefore, the point spectrum of $U$ and $V$ is $\sigma_p = \{1, \lambda_1, \lambda_2\}$. Moreover, $\mathcal{C}$ is a family of tri-circular projections.
\subsection*{Case (2)} $\varphi(z) \neq z, \varphi^2(z) =z$.
			
From Equation \eqref{me2}, we have
\begin{equation} \label{mi2}
U^3 [(\varphi)'(z)]^{\frac{1}{p}} f'(\varphi(z)) -(1+l) U^2 f'(z) + (l+m) U[\varphi'(z)] ^{\frac{1}{p}} f'(\varphi(z)) -mf'(z) =0.
\end{equation}

Again, for a fixed nonzero vector $v \in \mathcal{K}$, we choose the functions $f'_1(\xi)= v$ and  $f'_2(\xi) = \xi v$  for all $\xi \in \mathbb{D}$. Evaluating Equation \ref{mi2} for $f'_1$ and $f'_2$, we get the following equations, respectively
\begin{equation} \label{f4}
[(\varphi)'(z)]^{\frac{1}{p}} 	U^3v -(1+l) U^2 v + (l+m) [\varphi'(z)] ^{\frac{1}{p}} Uv -mv =0,
\end{equation}
\begin{equation} \label{f5}
[(\varphi)'(z)]^{\frac{1}{p}} \varphi(z) 	U^3v -(1+l)z U^2v + (l+m) [\varphi'(z)] ^{\frac{1}{p}} \varphi(z) Uv -m zv =0.
\end{equation}
			
Multiplying Equation \eqref{f4} by $\varphi(z)$ and then subtracting it from \eqref{f5}, we arrive at
\begin{equation*} 
(z - \varphi(z)) [ -(1+l) U^2v -mv]=0.
\end{equation*}
Since $\varphi(z) \neq z$, therefore  
\begin{equation} \label{f6}
mv = -(1+l) U^2v.
\end{equation}			
Again by the Equation \eqref{f4}, we have 			
\begin{equation*} 
(1+l) U^2 v = [(\varphi)'(z)]^{\frac{1}{p}} 	U^3v  + (l+m) [\varphi'(z)] ^{\frac{1}{p}} Uv -mv.
\end{equation*}			
Putting this in the Equation \eqref{f5}, we get
\begin{equation*} 
(\varphi(z)-z) [ (\varphi)'(z)]^{\frac{1}{p}}	[U^3v  + (l+m) Uv]  =0.
\end{equation*}			
Here $\varphi(z) \neq z$, thus			
\begin{equation} \label{f7}
U^3v  =- (l+m)Uv.
\end{equation}		
Simplifying the Equations \ref{f6} and \ref{f7}, we obtain $(1+l)(l+m) -m =0$, or $(\lambda_1+1)(\lambda_2+1)(\lambda_1+ \lambda_2) =0$. This implies that $\lambda_1= - \lambda_2$ or $\lambda_2=-1$ or $\lambda_1=-1$. Therefore, $P_0$ or $P_1$ or $P_2$, respectively, becomes a generalized bi-circular projection.

\subsection*{Case (3)} $\varphi(z) \neq z, \varphi^2(z) \neq z, \varphi^3(z) =z$. 
		
Equation \eqref{me2} becomes			
\begin{equation*} 
U^3 f'(z) -(1+l) U^2[(\varphi^2)'(z)]^{\frac{1}{p}} f'(\varphi^2(z)) +(l+m) U[\varphi'(z)] ^{\frac{1}{p}} f'(\varphi(z)) -mf'(z) =0.
\end{equation*}

For a fixed nonzero vector $v \in \mathcal{K}$, select the functions $f'_1(\xi) = v, f'_2(\xi)= \xi v $ and $f'_3(\xi) = \xi^2v$ for all $\xi \in \mathbb{D}$. We obtain the following equations, respectively.         
\begin{equation} \label{f1}
U^3v -(1+l) [(\varphi^2)'(z)]^{\frac{1}{p}} U^2v +(l+m)[\varphi'(z)] ^{\frac{1}{p}} Uv -mv =0.
\end{equation}	
\begin{equation}\label{f2}
z U^3v  -(1+l) [(\varphi^2)'(z)]^{\frac{1}{p}} \varphi^2(z)U^2v +(l+m) [\varphi'(z)] ^{\frac{1}{p}} \varphi(z)Uv -mzv =0.
\end{equation}			
\begin{equation} \label{f3}
z^2 U^3v  -(1+l) [(\varphi^2)'(z)]^{\frac{1}{p}} (\varphi^2(z))^2U^2v +(l+m) [\varphi'(z)] ^{\frac{1}{p}} (\varphi(z))^2Uv -mz^2v =0.
\end{equation}

Multiplying Equations \eqref{f1} and \eqref{f2} by $z$, then subtracting the Equation \eqref{f3} from \eqref{f2} and Equation \eqref{f2} from \eqref{f1} we get 
\begin{equation}\label{c11}
(1+l) [(\varphi^2)'(z)]^{\frac{1}{p}}[z - \varphi^2(z)] = (l+m)[\varphi'(z)] ^{\frac{1}{p}} [z- \varphi(z)] 
\end{equation}
and
\begin{equation} \label{c2}
(1+l) [(\varphi^2)'(z)]^{\frac{1}{p}} \varphi^2(z)[z - \varphi^2(z)] = (l+m)[\varphi'(z)] ^{\frac{1}{p}} \varphi(z)[z- \varphi(z)] .
\end{equation}
From above Equations \eqref{c11} and \eqref{c2}, we obtain
\begin{equation*}
(l+m)[\varphi'(z)] ^{\frac{1}{p}} (z-\varphi(z))(\varphi(z) - \varphi^2(z)) =0.
\end{equation*}
Since $[\varphi'(z)]^{\frac{1}{p}} \neq 0$ . It follows that			
\begin{equation*}
(l+m) (z-\varphi(z))(\varphi(z) - \varphi^2(z)) =0.
\end{equation*}
Since, $z \neq \varphi(z)$, we have $l+m =0$, i.e. $\lambda_1 + \lambda_2 + \lambda_1 \lambda_2 =0$. Similarly, $1+l =0$, i.e., $1 +	\lambda_1 + \lambda_2 =0$. This implies that $\lambda_1$ and $\lambda_2$ are cube roots of unity. Moreover, $T^3=I$. This further implies that 
\begin{equation*}
V^3f(0) + U^3 \int_{0}^{z} f'(\xi) d\xi  =f(z),
\end{equation*}
or
\begin{equation} \label{mi1}
(U^3 - I_{\mathcal{K}})f(z) + (V^3-U^3) f(0) = 0,
\end{equation} 
where $I_{\mathcal{K}}$ is an identity operator on $\mathcal{K}$. Therefore, $U^3 = V^3 = I_{\mathcal{K}}$. 

This completes the proof of the theorem.
\end{proof}

\end{document}